
%

\documentclass[reqno]{amsart}
\usepackage{url}
\newtheorem{theorem}{Theorem}[section]
\newtheorem{lemma}[theorem]{Lemma}

\theoremstyle{definition}

\theoremstyle{remark}

\numberwithin{equation}{section}
\usepackage{algorithmic}
\raggedbottom
\usepackage[top=3cm,bottom=2cm,right=2cm,left=2cm]{geometry}

\begin{document}

\title[Divisibility of $7$-Elongated Plane Partition Diamonds by Powers of 8]{On the Divisibility of 7-Elongated Plane Partition Diamonds by Powers of 8}


\author{James A. Sellers}
\address{}
\curraddr{}
\email{}
\thanks{}

\author{Nicolas Allen Smoot}
\address{}
\curraddr{}
\email{}
\thanks{}

\keywords{Partition congruences, infinite congruence family, modular functions, plane partitions, modular curve, Riemann surface}

\subjclass[2010]{Primary 11P83, Secondary 30F35}

\date{}

\dedicatory{}

\begin{abstract}
In 2021 da Silva, Hirschhorn, and Sellers studied a wide variety of congruences for the $k$-elongated plane partition function $d_k(n)$ by various primes.  They also conjectured the existence of an infinite congruence family modulo arbitrarily high powers of 2 for the function $d_7(n)$.  We prove that such a congruence family exists---indeed, for powers of 8.  The proof utilizes only classical methods, i.e., integer polynomial manipulations in a single function, in contrast to all other known infinite congruence families for $d_k(n)$ which require more modern methods to prove.
\end{abstract}

\maketitle

\section{Introduction}

The study of $k$-elongated plane partition diamonds began as a part of the series of papers on MacMahon's Partition Analysis by Andrews and Paule.  The number of $k$-elongated partition diamonds is denoted $d_k(n)$, and is enumerated by the function \begin{align}
D_k(q) := \sum_{n=0}^{\infty}d_k(n)q^n = \prod_{m=1}^{\infty}\frac{(1-q^{2m})^k}{(1-q^m)^{3k+1}}\label{D2}.
\end{align}  This function serves as a generalization of the unrestricted partition function $p(n) = d_0(n)$.  Moreover, $d_k(n)$ was developed by Andrews and Paule in \cite{AndrewsPaule2} as an application of the techniques of Partition Analysis.  In the latest paper in their series \cite{AndrewsPaule}, Andrews and Paule explored various congruence properties of $d_k(n)$, of the form of the classical congruences for $p(n)$ first studied by Ramanujan \cite{Ramanujan}.  In particular, they proposed \cite[Section 7]{AndrewsPaule} an infinite family of congruences for $d_2(n)$ modulo powers of 3, which was proved in \cite{Smoot2}.

More recently, da Silva, Hirschhorn, and Sellers \cite{dasilvaet} have explored a large variety of congruences by running both $n$ and $k$ through different arithmetic progressions.  Their work has revealed that $d_k(n)$ contains an enormous diversity of divisibility properties, even by the standards of partition theory.

As a result of this latest work, combined with numerical experimentation, da Silva, Hirschhorn, and Sellers conjectured \cite[Section 5]{dasilvaet} the existence of at least one infinite family of congruences over $d_7(n)$ by powers of 2.  We have confirmed this conjecture, as shown in the following result, which we prove in this paper:

\begin{theorem}\label{Thm12}
Let $n,\alpha\in\mathbb{Z}_{\ge 1}$ such that $3n\equiv 1\pmod{4^{\alpha}}$.  Then $d_7(n)\equiv 0\pmod{8^{\alpha}}$.
\end{theorem}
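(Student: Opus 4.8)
The plan is to build a localized generating function for the arithmetic progression $3n \equiv 1 \pmod{4^\alpha}$ and show it lies in a suitable $\mathbb{Z}[8]$-module structure that forces the claimed divisibility. First I would extract from \eqref{D2} the subseries $\sum_{n \colon 3n \equiv 1 \, (4^\alpha)} d_7(n) q^n$ and re-express it, after an appropriate substitution $q \mapsto q^{1/4^\alpha}$ or dissection, in terms of a single modular function $t$ on the relevant modular curve (the abstract promises ``integer polynomial manipulations in a single function,'' so the entire argument should be reducible to one Hauptmodul-like generator $t$). The key is to define a sequence of functions $L_\alpha(q)$ encoding the coefficients $d_7(n)$ along the progression, and to find a recurrence or functional relation expressing $L_{\alpha+1}$ in terms of $L_\alpha$ via multiplication by an explicit polynomial in $t$.

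Next I would set up the induction on $\alpha$. The base case $\alpha = 1$ requires checking that $d_7(n) \equiv 0 \pmod 8$ whenever $3n \equiv 1 \pmod 4$, which should follow from a finite computation once $L_1$ is written as $8$ times an integer power series in $t$. For the inductive step, I would show that the operator carrying $L_\alpha$ to $L_{\alpha+1}$ multiplies the $8$-adic valuation by at least one more power of $8$; concretely, one expresses $L_{\alpha+1} = \Phi(L_\alpha)$ where $\Phi$ is built from a Hecke-type or $U$-type dissection operator, and one proves that $\Phi$ maps the module $8^\alpha \cdot \mathbb{Z}[[q]]$ (intersected with the space of admissible modular functions) into $8^{\alpha+1} \cdot \mathbb{Z}[[q]]$. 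The cleanest formulation is to exhibit an explicit finitely generated module $M$ over $\mathbb{Z}$, spanned by powers $t^j$, that is stable under $\Phi$ up to a uniform gain of a factor of $8$.

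The hard part will be establishing this uniform $8$-divisibility gain of the transfer operator $\Phi$, i.e., proving that the polynomial multipliers appearing in the dissection all carry the correct power-of-$8$ content independent of $\alpha$. This is exactly where infinite congruence families typically demand the heavy modular machinery (localization at a cusp, computing orders of vanishing, bounding pole orders on a genus-zero or higher curve), but the abstract's emphasis on ``classical methods'' suggests that here the modular curve is arranged so that a single generator $t$ suffices and the module $M$ has small, explicitly computable rank. I would therefore aim to (i) identify $t$ and compute the finitely many relations $\Phi(t^j) = \sum_i c_{ij} t^i$ with $c_{ij} \in 8\mathbb{Z}$ in the initial rows, (ii) prove by a valuation argument at the relevant cusp that the expansion of $\Phi(t^j)$ in powers of $t$ has bounded length and integer coefficients divisible by $8$, and (iii) close the induction by verifying $M$ is $\Phi$-stable. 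The main obstacle is thus the careful bookkeeping of $2$-adic valuations through the dissection; everything else reduces to a finite verification that can be checked by direct computation.
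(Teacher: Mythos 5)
Your overall architecture matches the paper's: it defines $L_\alpha$ exactly as you propose (an eta-quotient prefactor times the generating function of $d_7$ along the progression), constructs a $U$-type operator satisfying $U(L_\alpha)=L_{\alpha+1}$, expands everything in a Hauptmodul $x$ on $\mathrm{X}_0(8)$, and closes an induction through a module of polynomials in $x$ that gains a factor of $8$ under $U$. However, your crucial step (ii) is stated in a form that is false, and what replaces it is precisely the idea that makes the paper's proof work. It is not true that the coefficients of $\Phi(t^j)=U(x^j)$ are all divisible by $8$: from the paper's initial relations, $U(x^2)=34x+\cdots$ and $U(x^3)=x+116980x^2+\cdots$ (see (\ref{relationapux2}) and (\ref{relationapux3})), whose low-order coefficients have $2$-adic valuations $1$ and $0$. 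Consequently no module of the form ``$8\mathbb{Z}$-span of powers of $t$ with a uniform gain of a factor of $8$'' can be $\Phi$-stable, and your induction would break on the first application of $\Phi$. What is needed is a \emph{weighted} module: the paper's $\mathcal{V}=\left\{\sum_{n\ge 1}s(n)2^{\theta(n)}x^n\right\}$ with $\theta(n)=\left\lfloor (8n-5)/4\right\rfloor$, together with a two-variable lower bound $\pi(n,r)$ on the valuation of the coefficient of $x^r$ in $U(x^n)$, and the key inequality $\pi(n,r)+\theta(n)\ge\theta(r)+3$. Powers $x^n$ with large $n$, whose images under $U$ contain coefficients of small valuation, are only admitted into $\mathcal{V}$ carrying the large weight $2^{\theta(n)}$, and it is exactly this trade-off that produces the uniform gain of $2^3$. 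The same correction applies to your base case: one must verify $\frac{1}{8}L_1\in\mathcal{V}$ in the weighted sense, not merely $L_1\equiv 0\pmod 8$.

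There is a second gap in the mechanism you propose for controlling $\Phi(t^j)$ for all $j$. A ``valuation argument at the relevant cusp'' can prove that $U(x^n)$ is a polynomial in $x$ of bounded degree, but orders of vanishing at cusps carry no $2$-adic information; divisibility of integer coefficients is an arithmetic statement invisible to cusp analysis. The paper's engine for this is a modular equation (Theorem \ref{modeqntheorem}): $x(\tau)$ satisfies a degree-$4$ polynomial identity over $\mathbb{Z}[x(4\tau)]$, which yields the recurrence $U(x^n)=-\sum_{j=0}^{3}a_j(\tau)\,U(x^{n+j-4})$ and lets the explicit $2$-adic valuations $\phi(j,k)$ of the coefficients of the $a_j$ propagate the bound $\pi(n,r)$ from four directly computed initial cases $U(x^n)$, $0\le n\le 3$, to all $n$ by induction (Lemma \ref{mainlemmag}). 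Without this device, or an equivalent one, the ``careful bookkeeping of $2$-adic valuations'' that you correctly identify as the main obstacle has no mechanism to run on: the finite computation you envision in (i) establishes only finitely many rows, and nothing in your plan transfers their arithmetic content to arbitrary powers of $t$.
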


This congruence family is associated with the classical modular curve $\mathrm{X}_0(8)$.  The individual cases of the congruence family are enumerated by the coefficients of the modular function sequence $\mathcal{L} := \left( L_{\alpha} \right)_{\alpha\ge 1}$ defined by \begin{align}
L_{\alpha} := \prod_{m=1}^{\infty}\frac{(1-q^m)^{26}(1-q^{4m})^{2}}{(1-q^{2m})^{13}}\cdot\sum_{n=0}^{\infty}d_k(4^{\alpha}n+\lambda_{\alpha})q^{n+1},\label{lalphadefn}
\end{align} with $q := e^{2\pi i\tau}$, $\tau\in\mathbb{H}$, and $y=\lambda_{\alpha}$ the minimum positive solution to $3y\equiv 1\pmod{4^{\alpha}}$.

The curve $\mathrm{X}_0(8)$ has genus 0, and cusp count greater than 2.  Ordinarily this would make the classical techniques difficult to apply.  Indeed, the congruence family for $d_2$ modulo powers of 3 proved in \cite{Smoot2} requires more modern techniques, notably the localization method.

However, in the case of Theorem \ref{Thm12} we can construct an associated sequence of modular functions $L_{\alpha}$ which live at a single cusp.  This allows a polynomial representation of each $L_{\alpha}$ in terms of a Hauptmodul.

As an example, we consider the case $\alpha=1$: \begin{align*}
L_1&=\prod_{m=1}^{\infty}\frac{(1-q^m)^{26}(1-q^{4m})^{2}}{(1-q^{2m})^{13}}\cdot\sum_{n=0}^{\infty}d_7(4n+3)q^{n+1}.
\end{align*}  If we express $L_1$ in terms of a certain Hauptmodul $x$ (defined in (\ref{xdefn}) below) which expands to an integer-power series (to be defined in the following section), we have the following: \begin{align}
L_1 &= 2376 x + 2769184 x^2 + 753360896 x^3 + 87754260480 x^4 + 
 5608324988928 x^5 + 224018944753664 x^6\label{L1inx}\\ &+ 6042206699782144 x^7 + 
 115546340691279872 x^8 + 1616547968486211584 x^9\nonumber\\ &+ 
 16870983657286795264 x^{10} + 132703559201308278784 x^{11} + 
 788474037948865576960 x^{12}\nonumber\\ &+ 3517455424164433231872 x^{13} + 
 11593058074386073911296 x^{14} + 27374968205384974598144 x^{15}\nonumber\\ &+ 
 43792570430986475536384 x^{16} + 42501298345826806923264 x^{17} + 
 18889465931478580854784 x^{18}\nonumber.
\end{align}  A quick examination of the coefficients of (\ref{L1inx}) demonstrates that \begin{align*}
L_1 &\equiv 0\pmod{8}.
\end{align*} Using (\ref{L1inx}) in combination with some techniques in the theory of modular functions, we prove Theorem \ref{Thm12} as a corollary of the following theorem:

\begin{theorem}\label{Thmmaintoc}
For all $\alpha\ge 1$, \begin{align}
\frac{1}{8^{\alpha}}L_{\alpha}\in\mathbb{Z}[x].
\end{align}
\end{theorem}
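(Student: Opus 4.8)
The plan is to argue by induction on $\alpha$, taking the explicit expansion (\ref{L1inx}) as the base case: its coefficients are all divisible by $8$, so $\frac{1}{8}L_1\in\mathbb{Z}[x]$. The feature that makes a purely classical induction possible is the single-cusp property emphasized in the introduction. Since $\mathrm{X}_0(8)$ has genus $0$ with Hauptmodul $x$ of (\ref{xdefn}), and since each $L_\alpha$ is a modular function for $\Gamma_0(8)$ holomorphic away from the one cusp where $x$ has its pole, each $L_\alpha$ must be a genuine polynomial in $x$ with no constant term (it lies in $x\,\mathbb{Z}[x]$ because it also vanishes at $\infty$, matching the $q^{n+1}$ shift in (\ref{lalphadefn})). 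This reduces the entire problem to integer polynomial algebra in the single variable $x$, sidestepping the rational-function localization that the multi-cusp situation would otherwise force.

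First I would make the inductive recurrence precise. Writing $\lambda_{\alpha+1}=\lambda_\alpha+4^\alpha j$ with $j\in\{0,1,2,3\}$ (the congruence $\lambda_{\alpha+1}\equiv\lambda_\alpha\pmod{4^\alpha}$ being a Hensel lift of $3y\equiv1$), the progression $4^{\alpha+1}n+\lambda_{\alpha+1}$ equals $4^\alpha(4n+j)+\lambda_\alpha$, so passing from $L_\alpha$ to $L_{\alpha+1}$ amounts to extracting the residue class $j$ modulo $4$. I would therefore introduce the residue-extraction operator on $q$-series and dress it with a suitable eta-quotient $t(\tau)$, defining an operator $\mathcal{U}$ with $\mathcal{U}(L_\alpha)=L_{\alpha+1}$ that preserves modularity for $\Gamma_0(8)$ and preserves the single-cusp property. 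The next step is the \emph{modular equation}: computing $\mathcal{U}(x^n)$ as an explicit element of $\mathbb{Z}[x]$ for each $n$. Because residue extraction contracts pole orders at the relevant cusp while multiplication by $t$ shifts them by a fixed amount, the degree of $\mathcal{U}(x^n)$ grows only mildly in $n$, so this is a finite computation governed by the known $q$-expansion of $x$ and the transformation theory of the extraction operator.

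The arithmetic heart of the proof, and the step I expect to be the main obstacle, is a \emph{$2$-adic divisibility lemma} for $\mathcal{U}$: it is not enough that $\mathcal{U}$ preserve integrality, since to convert index growth by a factor of $4$ into divisibility growth by a factor of $8$ one must show that $\mathcal{U}$ maps $x\,\mathbb{Z}[x]$ into $8\,x\,\mathbb{Z}[x]$. Establishing the correct power of $2$ attached to each coefficient of $\mathcal{U}(x^n)$ in the $x$-basis is delicate, and I do not expect the naive monomial basis to exhibit the valuations cleanly; the likely remedies are a rescaled basis of the form $\{(2^{c}x)^m\}$ or a Newton-polygon bookkeeping that tracks, for each output degree, the minimal $2$-adic valuation among the contributing terms, refined enough to guarantee a full factor of $8$ at every degree. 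Pinning down this linear valuation bound and verifying it against the modular equation is where the real work lies. Granting the lemma, the induction closes at once: if $L_\alpha=8^\alpha g_\alpha$ with $g_\alpha\in\mathbb{Z}[x]$, then $L_{\alpha+1}=\mathcal{U}(L_\alpha)=8^\alpha\,\mathcal{U}(g_\alpha)\in 8^{\alpha+1}\mathbb{Z}[x]$.

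Finally I would deduce Theorem \ref{Thm12} from Theorem \ref{Thmmaintoc}. Since $x$ expands as an integer power series, $8^\alpha\mathbb{Z}[x]\subseteq 8^\alpha\mathbb{Z}[[q]]$, so $L_\alpha\equiv 0\pmod{8^\alpha}$ as a series. The eta-quotient prefactor in (\ref{lalphadefn}) has constant term $1$ and hence is invertible over $\mathbb{Z}[[q]]$, so the congruence transfers to $\sum_n d_7(4^\alpha n+\lambda_\alpha)\,q^{n+1}\equiv 0\pmod{8^\alpha}$, giving $d_7(4^\alpha n+\lambda_\alpha)\equiv 0\pmod{8^\alpha}$. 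As $3(4^\alpha n+\lambda_\alpha)\equiv 3\lambda_\alpha\equiv 1\pmod{4^\alpha}$, the residue class $m\equiv\lambda_\alpha\pmod{4^\alpha}$ is exactly the solution set of $3m\equiv 1\pmod{4^\alpha}$, which yields the stated congruence.
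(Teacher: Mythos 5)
Your overall architecture matches the paper's: an operator $U(f)=U_4(\mathcal{A}\cdot f)$ with $U(L_{\alpha})=L_{\alpha+1}$, a modular equation for $x$ used to control $U(x^n)$, a $2$-adic valuation lemma, and an induction on $\alpha$ anchored at (\ref{L1inx}). But the key lemma you commit to is false, and your induction closes using exactly that false form. You assert that one must show $\mathcal{U}$ maps $x\,\mathbb{Z}[x]$ into $8\,x\,\mathbb{Z}[x]$, and your final step reads: if $L_{\alpha}=8^{\alpha}g_{\alpha}$ with $g_{\alpha}\in\mathbb{Z}[x]$, then $\mathcal{U}(g_{\alpha})\in 8\,\mathbb{Z}[x]$. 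The paper's own initial relations refute this: by (\ref{relationapux2}) and (\ref{relationapux3}), $U(x^2)=34x+649128x^2+\cdots$ and $U(x^3)=x+116980x^2+\cdots$, whose coefficients of $x$ are $34$ and $1$ respectively, neither divisible by $8$. So $U$ does \emph{not} map $x\,\mathbb{Z}[x]$ into $8\,\mathbb{Z}[x]$, and an induction that tracks only membership in $\mathbb{Z}[x]$ (even with zero constant term) cannot close.

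The remedy is the one you half-named---a rescaled basis---but it must be built into both the lemma and the inductive hypothesis, and this is precisely what the paper does. It works in the weighted space $\mathcal{V}=\left\{\sum_{n\ge 1}s(n)\cdot 2^{\theta(n)}x^n\right\}$ with $\theta(n)=\left\lfloor\frac{8n-5}{4}\right\rfloor=2n-2$, which is essentially your basis $\{(2^c x)^m\}$ with $c=2$. The paper first proves the two-variable valuation bound $U(x^n)=\sum_r h(n,r)\,2^{\pi(n,r)}x^r$ (Lemma \ref{mainlemmag}, by induction on $n$ through the modular equation, with the four appendix relations as base cases), and then the inequality $\pi(n,r)+\theta(n)\ge\theta(r)+3$, which yields $\frac{1}{8}U(f)\in\mathcal{V}$ for all $f\in\mathcal{V}$ (Theorem \ref{bigtheorem}). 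The point your write-up misses is that the small valuations in the low-degree coefficients of $U(x^n)$ (like the $1\cdot x$ term of $U(x^3)$) are compensated by the weight $2^{\theta(n)}$ that $x^n$ carries inside $\mathcal{V}$: the gain of a factor $8$ is a property of the pair (operator, weighted space), not of the operator acting on $x\,\mathbb{Z}[x]$. Correspondingly, the induction hypothesis must be strengthened from $\frac{1}{8^{\alpha}}L_{\alpha}\in\mathbb{Z}[x]$ to $\frac{1}{8^{\alpha}}L_{\alpha}\in\mathcal{V}$, and the base case must be checked in this stronger form against (\ref{L1inx}) (which holds: the coefficient of $x^n$ there is divisible by $8\cdot 4^{n-1}$). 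With those repairs your argument becomes the paper's proof; your deduction of Theorem \ref{Thm12} from Theorem \ref{Thmmaintoc} in the last paragraph is fine as written.
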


The remainder of this paper is as follows: In Section \ref{setupsec} we define the key linear operator $U$ which maps each $L_{\alpha}$ to $L_{\alpha+1}$.  We then define our Hauptmodul $x$, list some of its key properties, and give a modular equation (\ref{modX}) from which we can build useful recurrence relations for $U(x^n)$.

In Section \ref{mainlemmasection} we show how to represent $U(x^n)$ as a polynomial in $x$, paying careful attention to the 2-adic valuation of the coefficients of $x^n$.  The main result in this section, Lemma \ref{mainlemmag}, is proved by an induction which requires four initial relations be proved directly.

In Section \ref{maintheoremsection} we construct the polynomial space $\mathcal{V}$ in which our functions $L_{\alpha}$ live.  We show that applying our operator $U$ to elements of $\mathcal{V}$ results in new elements of $\mathcal{V}$ in which each coefficient gains an additional multiplicative factor of 8.  This is sufficient for us to complete the proof of Theorems \ref{Thmmaintoc} and \ref{Thm12}.

In Section \ref{initialrelsection} we show how to prove the initial relations for Lemma \ref{mainlemmag}.  The relations themselves are included in the Appendix.  We also verify the identity (\ref{L1inx}) for $L_1$, and the modular equation (\ref{modX}).

In Section \ref{additionalsection} we briefly discuss the prospect of finding more congruence families for $d_k(n)$, and where more work is needed.

\section{Setup}\label{setupsec}

For the sake of convenience, we define the standard $q$-Pochhammer symbol by

\begin{align*}
(q^a;q^b)_{\infty} := \prod_{m=0}^{\infty} \left( 1-q^{a+bm} \right).
\end{align*}  Notice that with this notation, we have \begin{align*}
L_{\alpha} := \frac{(q;q)^{26}_{\infty}(q^4;q^4)^{2}_{\infty}}{(q^2;q^2)^{13}_{\infty}}\sum_{n=0}^{\infty}d_k(4^{\alpha}n+\lambda_{\alpha})q^{n+1}.
\end{align*}

It is a straightforward process to prove that each $L_{\alpha}$ as defined in (\ref{lalphadefn}) is a modular function over the congruence subgroup $\Gamma_0(8)$, as we will show in Section \ref{initialrelsection}.  It is only slightly more difficult to form the necessary linear operator that permits us to construct $L_{\alpha+1}$ from $L_{\alpha}$.  With this in mind, we define our key operator, \begin{align}
U\left( f \right) := U_4\left( \mathcal{A}\cdot f \right),\label{uoperatordefn}
\end{align} with \begin{align}
\mathcal{A} := q\frac{(q^2;q^2)_{\infty}^{13}(q^4;q^4)_{\infty}^{24}(q^{16};q^{16})_{\infty}^2}{(q;q)_{\infty}^{26}(q^8;q^8)_{\infty}^{13}}.
\end{align}  We can quickly verify that $U$ is a linear operator.

\begin{lemma}
For all $\alpha\ge 1$, $U\left(L_{\alpha}\right) = L_{\alpha+1}$.
\end{lemma}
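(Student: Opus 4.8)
The plan is to compute $U(L_\alpha)=U_4(\mathcal{A}\cdot L_\alpha)$ explicitly and match the outcome term-by-term against the definition $(\ref{lalphadefn})$ of $L_{\alpha+1}$. Write $P(q):=(q;q)_\infty^{26}(q^4;q^4)_\infty^{2}/(q^2;q^2)_\infty^{13}$ for the $q$-Pochhammer prefactor, so that $L_\alpha=P(q)\sum_{n\ge 0}d_k(4^\alpha n+\lambda_\alpha)\,q^{n+1}$. The decisive first step is a pure product manipulation: on forming $\mathcal{A}\cdot P(q)$ the factors $(q;q)_\infty^{26}$ and $(q^2;q^2)_\infty^{13}$ cancel completely and the survivors collapse to
\begin{align*}
\mathcal{A}\cdot P(q)=q\,\frac{(q^4;q^4)_\infty^{26}(q^{16};q^{16})_\infty^{2}}{(q^8;q^8)_\infty^{13}}=q\,P(q^4).
\end{align*}
This identity is the linchpin of the whole argument: it shows that $\mathcal{A}$ has been engineered precisely so as to transform $P(q)$ into the \emph{same} prefactor evaluated at $q^4$, which is exactly the form needed for the $U_4$ operator to interact cleanly.

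Next I would invoke the standard linear dissection operator $U_4\bigl(\sum_n a_n q^n\bigr)=\sum_n a_{4n}q^n$ and its multiplicativity $U_4\bigl(g(q^4)h(q)\bigr)=g(q)\,U_4\bigl(h(q)\bigr)$, applied with $g=P$. Since $\mathcal{A}\cdot L_\alpha=P(q^4)\sum_{n\ge 0}d_k(4^\alpha n+\lambda_\alpha)\,q^{n+2}$, this gives
\begin{align*}
U(L_\alpha)=P(q)\,U_4\!\left(\sum_{n\ge 0}d_k(4^\alpha n+\lambda_\alpha)\,q^{n+2}\right),
\end{align*}
so the prefactor $P(q)$ demanded by $L_{\alpha+1}$ reappears automatically. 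It then remains to dissect the inner series: its coefficient of $q^m$ is $d_k(4^\alpha(m-2)+\lambda_\alpha)$ for $m\ge 2$, so $U_4$ keeps only the terms with $m=4j$, $j\ge 1$, leaving $\sum_{j\ge 1}d_k\bigl(4^{\alpha+1}j-2\cdot 4^\alpha+\lambda_\alpha\bigr)\,q^{j}$.

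The main obstacle, modest as it is, is the arithmetic bookkeeping needed to recognize this last series as $L_{\alpha+1}$. Reindexing by $j=n+1$ sends the argument to $4^{\alpha+1}n+(2\cdot 4^\alpha+\lambda_\alpha)$ and the exponent to $q^{n+1}$, so I must confirm that $\lambda_{\alpha+1}=2\cdot 4^\alpha+\lambda_\alpha$. To settle this I would exhibit the closed form $\lambda_\alpha=(2\cdot 4^\alpha+1)/3$: because $4^\alpha\equiv 1\pmod 3$ this is an integer, one checks $3\lambda_\alpha=2\cdot 4^\alpha+1\equiv 1\pmod{4^\alpha}$ and $0<\lambda_\alpha<4^\alpha$, so it is the minimal positive solution of $3y\equiv 1\pmod{4^\alpha}$. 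Then $2\cdot 4^\alpha+\lambda_\alpha=(8\cdot 4^\alpha+1)/3=(2\cdot 4^{\alpha+1}+1)/3=\lambda_{\alpha+1}$ exactly, and assembling the three steps yields
\begin{align*}
U(L_\alpha)=P(q)\sum_{n\ge 0}d_k\bigl(4^{\alpha+1}n+\lambda_{\alpha+1}\bigr)\,q^{n+1}=L_{\alpha+1},
\end{align*}
which completes the proof.
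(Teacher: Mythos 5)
Your proposal is correct and follows essentially the same route as the paper's proof: the collapse $\mathcal{A}\cdot P(q)=q\,P(q^4)$, the multiplicativity of $U_4$, the dissection of the inner series, and the reindexing via the closed form $\lambda_\alpha=(2\cdot 4^\alpha+1)/3$. The only difference is cosmetic---you verify explicitly that this closed form is the minimal positive solution of $3y\equiv 1\pmod{4^\alpha}$, a fact the paper asserts without proof.
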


\begin{proof}

We use the important property that, for any power series $f(q) = \sum_{n\ge N} a(n)q^n$, $g(q) = \sum_{n\ge M} b(n)q^n$ \begin{align}
U_4\left( f(q^4)g(q) \right) = f(q)\cdot U_4\left( g(q) \right).
\end{align}  Now, applying $U$ to $L_{\alpha}$, we have \begin{align*}
U\left( L_{\alpha} \right) &= U_4\left(\mathcal{A}\cdot L_{\alpha} \right)\\
&= U_4\left(\frac{(q^2;q^2)_{\infty}^{13}(q^4;q^4)_{\infty}^{24}(q^{16};q^{16})_{\infty}^2}{(q;q)_{\infty}^{26}(q^8;q^8)_{\infty}^{13}}\cdot \frac{(q;q)^{26}_{\infty}(q^4;q^4)^{2}_{\infty}}{(q^2;q^2)^{13}_{\infty}}\sum_{n=0}^{\infty}d_k\left(4^{\alpha}n+\lambda_{\alpha}\right)q^{n+2}\right)\\
&= U_4\left(\frac{(q^4;q^4)^{26}_{\infty}(q^{16};q^{16})^{2}_{\infty}}{(q^8;q^8)^{13}_{\infty}}\cdot\sum_{n=0}^{\infty}d_k\left(4^{\alpha}n+\lambda_{\alpha}\right)q^{n+2}\right)\\
&= \frac{(q;q)^{26}_{\infty}(q^{4};q^{4})^{2}_{\infty}}{(q^2;q^2)^{13}_{\infty}} U_4\left(\sum_{n\ge 2}d_k\left(4^{\alpha}(n-2)+\lambda_{\alpha}\right)q^{n}\right)\\
&= \frac{(q;q)^{26}_{\infty}(q^{4};q^{4})^{2}_{\infty}}{(q^2;q^2)^{13}_{\infty}} \sum_{4n\ge 2} d_k\left(4^{\alpha}(4n-2)+\lambda_{\alpha}\right)q^{n}
\end{align*}  Notice that $4n\ge 2$ is equivalent to $n\ge 1$.  Moreover, we can show that \begin{align}
\lambda_{\alpha} = \frac{4^{\alpha}\cdot 2 + 1}{3}.
\end{align}  We therefore have \begin{align*}
U\left( L_{\alpha} \right) &= \frac{(q;q)^{26}_{\infty}(q^{4};q^{4})^{2}_{\infty}}{(q^2;q^2)^{13}_{\infty}} \sum_{n\ge 1} d_k\left(4^{\alpha+1}n - 2\cdot 4^{\alpha} +\frac{4^{\alpha}\cdot 2 + 1}{3}\right)q^{n}\\
&= \frac{(q;q)^{26}_{\infty}(q^{4};q^{4})^{2}_{\infty}}{(q^2;q^2)^{13}_{\infty}} \sum_{n\ge 1} d_k\left(4^{\alpha+1}n +\frac{4^{\alpha}\cdot 2 - 3\cdot 4^{\alpha}\cdot 2 + 1}{3}\right)q^{n}\\
&= \frac{(q;q)^{26}_{\infty}(q^{4};q^{4})^{2}_{\infty}}{(q^2;q^2)^{13}_{\infty}} \sum_{n=0}^{\infty} d_k\left(4^{\alpha+1}n + 4^{\alpha+1} +\frac{4^{\alpha}\cdot 2 - 3\cdot 4^{\alpha}\cdot 2 + 1}{3}\right)q^{n+1}\\
&= \frac{(q;q)^{26}_{\infty}(q^{4};q^{4})^{2}_{\infty}}{(q^2;q^2)^{13}_{\infty}} \sum_{n=0}^{\infty} d_k\left(4^{\alpha+1}n +\frac{4^{\alpha+1}\cdot 2 + 1}{3}\right)q^{n+1}\\
&= L_{\alpha+1}.
\end{align*}

\end{proof}

We now have our sequence $\mathcal{L}$ and our operator $U$ properly defined.  Now it remains to express each $L_{\alpha}$ in terms of some reference function over which we have greater control.  We can compute the monoid of eta quotients over $\Gamma_0(8)$ which live at a single cusp of $\mathrm{X}_0(8)$, say $[0]_8$.  We find a suitable Hauptmodul in the form \begin{align}
x := q\frac{(q^2;q^2)^2_{\infty}(q^8;q^8)^4_{\infty}}{(q;q)^4_{\infty}(q^4;q^4)^2_{\infty}}.\label{xdefn}
\end{align}  We can verify that $x$ is a modular function with respect to $\Gamma_0(8)$ by using \cite[Theorem 1]{Newman}, and we can compute the orders of $x$ with respect to $\Gamma_0(8)$ quickly \cite[Theorem 23]{Radu}: \begin{align*}
\mathrm{ord}_{\infty}(x) &= 1,\\
\mathrm{ord}_{1/4}(x) &= 0,\\
\mathrm{ord}_{1/2}(x) &= 0,\\
\mathrm{ord}_{0}(x) &= -1.
\end{align*}  We can also compute a lower bound for the orders of $L_{1}$, and find that $\mathrm{ord}_{c}\left(L_{1}\right) \ge 0$ except for $c\in [0]_8$.  We should therefore be able to represent $L_{1}$ in terms of powers of $x$ (indeed, we have given the representation above in (\ref{L1inx})).  We will need to examine how $U$ affects powers of $x$.  But before we do, we need to prepare an important result for the function $x$:

\begin{theorem}\label{modeqntheorem}
Define \begin{align*}
a_0(\tau) &= -(x + 20 x^2 + 128 x^3 + 256 x^4)\\
a_1(\tau) &=-(16 x + 320 x^2 + 2048 x^3 + 4096 x^4)\\
a_2(\tau) &= -(80 x + 1600 x^2 + 10240 x^3 + 20480 x^4)\\
a_3(\tau) &=-(128 x + 2560 x^2 + 16384 x^3 + 32768 x^4).
\end{align*}  Then \begin{align}
x^4+\sum_{j=0}^3 a_j(4\tau) x^j = 0.\label{modX}
\end{align}
\end{theorem}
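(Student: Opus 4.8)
The plan is to show that the left-hand side of (\ref{modX}), regarded as a single function, is a holomorphic modular function on a compact modular curve, hence constant, and then to identify that constant as $0$ by inspecting finitely many $q$-expansion coefficients. First I would set
\[
P(\tau) := x^4 + \sum_{j=0}^3 a_j(4\tau)\, x^j,
\]
where each $a_j(4\tau)$ denotes the displayed polynomial in the variable $x(4\tau)$. Both $x = x(\tau)$ and $x(4\tau)$ are eta quotients, and a direct computation with the scaling $\tau \mapsto 4\tau$ shows that $x(4\tau)$ is a modular function on $\Gamma_0(32)$: for $\gamma = \left(\begin{smallmatrix} a & b \\ c & d\end{smallmatrix}\right) \in \Gamma_0(32)$ one has $4\gamma\tau = \delta(4\tau)$ with $\delta = \left(\begin{smallmatrix} a & 4b \\ c/4 & d\end{smallmatrix}\right) \in \Gamma_0(8)$. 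Since $x(\tau)$ is modular on $\Gamma_0(8) \supseteq \Gamma_0(32)$, it too is modular on $\Gamma_0(32)$, and therefore so is $P$. As $x$ is an eta quotient it has neither zeros nor poles in $\mathbb{H}$, so $P$ is holomorphic on $\mathbb{H}$ and its only possible poles lie at the cusps of $\Gamma_0(32)$.

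Next I would control the behaviour of $P$ at the cusps. Using the order formula for eta quotients as in \cite[Theorem 23]{Radu}, together with the orders of $x$ on $\Gamma_0(8)$ recorded above, I would compute $\mathrm{ord}_c\left(a_j(4\tau)\,x^j\right)$ at each cusp $c$ of $\Gamma_0(32)$. The coefficient polynomials $a_0,\dots,a_3$ are engineered precisely so that at every cusp the principal parts contributed by the individual terms cancel: at the cusps in the orbit of $0$, where $x(\tau)$ contributes a pole, and at the cusps where $x(4\tau)$ contributes poles, the prescribed combinations of powers of $x(4\tau)$ exactly absorb the negative orders. The expected outcome is $\mathrm{ord}_c(P) \ge 0$ at every cusp, so that $P$ is holomorphic on the entire compact Riemann surface $\mathrm{X}_0(32)$.

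A holomorphic modular function on a compact modular curve is constant, so it remains only to evaluate that constant. Reading off the $q$-expansion of $P$ at $\infty$ from the eta-quotient expansions, each term $a_j(4\tau)\,x^j$ has strictly positive order at $\infty$, since $\mathrm{ord}_\infty(x) = 1$ and $x(4\tau) = O(q^4)$; hence the constant term of $P$ vanishes and $P \equiv 0$. Equivalently, one may bound the total pole order $B := \sum_c \max\{0,\,-\mathrm{ord}_c(P)\}$ from the computations of the previous step and then simply verify that the $q$-expansion of the left-hand side of (\ref{modX}) vanishes through order $q^{B}$; by the valence formula a modular function whose order at $\infty$ exceeds its total pole order is identically zero. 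The main obstacle is the cusp bookkeeping in the middle step: one must correctly determine the cusps of $\Gamma_0(32)$, their widths, and the exact orders there of each eta-quotient factor appearing in $a_j(4\tau)\,x^j$, and then confirm that the given coefficients produce the required cancellation of principal parts. Once those order computations are in place, the reduction to a finite $q$-series verification is routine.
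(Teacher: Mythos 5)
Your proposal contains two distinct arguments, and they are not equally sound. The primary route --- establish $\mathrm{ord}_c(P)\ge 0$ at every cusp of $\Gamma_0(32)$ and conclude that $P$ is a holomorphic, hence constant, hence zero --- has a genuine gap. Order bookkeeping for eta quotients (e.g.\ \cite[Theorem 23]{Radu}) only yields the bound $\mathrm{ord}_c(P)\ge\min_{\mathrm{terms}}\mathrm{ord}_c(\mathrm{term})$, and at the cusps where individual terms have poles this minimum is strictly negative: at $[0]_{32}$ one has $\mathrm{ord}_0(x)=-4$ and $\mathrm{ord}_0(x(4\tau))=-1$, so $x^4$ and the monomial $x(4\tau)^4x^3$ appearing in $a_3(4\tau)x^3$ both have poles of order $16$ there, with smaller poles at $[1/2]$, $[1/4]$, $[3/4]$. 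Whether these principal parts cancel cannot be read off from the orders; verifying cancellation requires the actual expansions of these eta quotients at those cusps (via the eta transformation formula or Atkin--Lehner maps), which your proposal does not supply and which is essentially as much work as the identity itself. In other words, ``the coefficients are engineered so that the principal parts cancel'' is not a checkable intermediate step here --- it is the theorem.

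Your fallback argument (the ``equivalently'' sentence), however, is sound and is in substance the paper's proof. Term-by-term order computations bound the possible pole order of $P$ at each cusp, giving a finite total $B$ (with the order data above, $B\le 16+3\cdot 4=28$); every term has order at least $4$ at $[\infty]_{32}$; and the valence formula then reduces the identity to verifying that the $q$-expansion of the left-hand side of (\ref{modX}) vanishes through $q^{B}$, a finite routine computation. The paper packages this same reduction slightly differently: it multiplies the left-hand side by $x(4\tau)^{-16}$, observes from the cusp orders that the product is a modular function with a pole only at $[\infty]_{32}$, and then checks that the principal part and constant term of its expansion vanish. The two finishes are interchangeable; the auxiliary factor $x(4\tau)^{-16}$ is just an explicit device for clearing the poles at the other cusps so that the finite check takes place inside $\mathcal{M}^{\infty}\left(\Gamma_0(32)\right)$. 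One caution if you carry the computation out: the correct order data is $\mathrm{ord}_\infty(x(4\tau))=+4$ (consistent with your observation $x(4\tau)=O(q^4)$), with the poles of $x(4\tau)$ lying at $[0]$, $[1/2]$, $[1/4]$, $[3/4]$; be careful with signs when tabulating, since the pole bound $B$, and hence the number of coefficients you must check, depends on them.
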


This is the modular equation in $x$ which will permit us to build useful recurrence relations for $U(x^n)$.  This may be constructed and proved using the cusp analysis which is standard to the subject of modular functions.

Notice that we can define integers $a(j,k)$ such that \begin{align}
a_j(\tau) = \sum_{k=1}^4 a(j,k) 2^{\phi(j,k)}x^k,\label{expressionaj}
\end{align} with \begin{align}
\phi(j,k) &:= \begin{cases} 
   0, &  j=0, k=1 \\
   2, &  j=0, k=2 \\
   7, &  j=0, k=3 \\
   8, &  j=0, k=4 \\
   \left\lfloor \frac{4k+2j+1}{2} \right\rfloor, &  1\le j\le 3.
  \end{cases}\label{expressionajk}
\end{align}  This function is a useful lower bound for the 2-adic valuation of the coefficients of $a_j(\tau)$, as we will see below.

\section{Main Lemma}\label{mainlemmasection}

We have seen that for each $L_{\alpha}$ we can obtain $L_{\alpha+1}$ by application of our operator $U$.  Because we intend to represent each $L_{\alpha}$ in terms of $x$, we next need to study how $U$ affects powers of $x$.  With this in mind, we define the function: \begin{align}
\pi(n,r) &:= \begin{cases} 
   \left\lfloor \frac{8r-5}{4} \right\rfloor + 3, &  n=1, \\
   \left\lfloor \frac{8r-5}{4} \right\rfloor + 1, & n=2, \\
   \left\lfloor \frac{4r-n-1}{2} \right\rfloor, & n\ge 3.
  \end{cases}
\end{align}  This will serve as a lower bound to the 2-adic valuation of the coefficients of $x^r$ in $U(x^n)$.

\begin{lemma}\label{mainlemmag}
For all $n\ge 1$, there exists a discrete array $h(n,r)$ such that \begin{align}
U\left( x^n \right) = \sum_{r\ge \left\lceil (n+1)/4 \right\rceil} h(n,r)\cdot 2^{\pi(n,r)}x^r.\label{uxnform}
\end{align}
\end{lemma}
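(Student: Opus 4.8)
The plan is to prove Lemma \ref{mainlemmag} by strong induction on $n$, using the modular equation (\ref{modX}) to reduce $U(x^n)$ for large $n$ to a combination of $U(x^j)$ with smaller $j$. The modular equation, evaluated so that the argument $4\tau$ appears in the coefficients $a_j$, says that multiplication by $x^4$ can be rewritten as $-\sum_{j=0}^3 a_j(4\tau)x^j$. The crucial structural point is that $a_j(4\tau)$ is a function of $q^4$ built from $x(4\tau)$, and under the operator $U=U_4(\mathcal{A}\cdot\,)$ any factor that is a power series in $q^4$ pulls out of $U_4$ with its argument divided by $4$ (the same multiplicative property used in the proof that $U(L_\alpha)=L_{\alpha+1}$). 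Concretely, writing $a_j(\tau)=\sum_{k=1}^4 a(j,k)2^{\phi(j,k)}x^k$ as in (\ref{expressionaj}), I expect a recurrence of the schematic shape
\begin{align*}
U\left(x^{n+4}\right) = -\sum_{j=0}^3\sum_{k=1}^4 a(j,k)\,2^{\phi(j,k)}\,x^k\cdot U\left(x^{n+j}\right),
\end{align*}
which expresses $U(x^{n+4})$ in terms of the four lower instances $U(x^{n}),\dots,U(x^{n+3})$, each multiplied by a polynomial in $x$ whose coefficients carry explicit powers of $2$ recorded by $\phi(j,k)$.

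First I would establish the four base cases $U(x^1),U(x^2),U(x^3),U(x^4)$ directly; the excerpt signals that these are exactly the four initial relations proved in Section \ref{initialrelsection} and tabulated in the Appendix, so I would simply verify that each conforms to the asserted form (\ref{uxnform}), i.e.\ that it is a power series in $x$ starting at $x^{\lceil(n+1)/4\rceil}$ whose $x^r$-coefficient has $2$-adic valuation at least $\pi(n,r)$. With the recurrence in hand, the inductive step amounts to a purely arithmetic verification: assuming the claimed lower bounds $\pi(n+j,s)$ hold for the four predecessors $j=0,1,2,3$, I must show that every monomial $x^r$ produced on the right-hand side of the recurrence has $2$-adic valuation at least $\pi(n+4,r)$. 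A term contributing to $x^r$ arises as a product $x^k\cdot(\text{coeff of }x^{r-k}\text{ in }U(x^{n+j}))$, so its valuation is at least $\phi(j,k)+\pi(n+j,r-k)$; the required inequality is therefore
\begin{align*}
\phi(j,k)+\pi(n+j,r-k)\ge \pi(n+4,r)
\end{align*}
for all admissible $j,k$, and I would verify this case by case according to the piecewise definitions of $\phi$ and $\pi$.

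The main obstacle I anticipate is precisely this floor-function inequality bookkeeping, together with the need to track the starting index carefully. The definitions of both $\pi$ and $\phi$ split into several regimes ($n=1,2$ versus $n\ge 3$ for $\pi$; the special low-degree values of $\phi$ at $j=0$ versus the generic $\lfloor(4k+2j+1)/2\rfloor$), so the inductive valuation estimate will fragment into many subcases near small $n$ and $r$, where the floor functions and the exceptional $\phi(0,k)$ values interact awkwardly. I would handle the generic regime first—where $\pi(m,s)=\lfloor(4s-m-1)/2\rfloor$ and $\phi(j,k)=\lfloor(4k+2j+1)/2\rfloor$—since there the inequality should follow cleanly from the superadditivity of the floor function and a direct comparison of linear parts, and then dispatch the finitely many boundary subcases (small $n$, small $r$, and the exceptional $j=0$ coefficients) by explicit computation. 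I would also need to confirm that the lower summation bound $r\ge\lceil(n+1)/4\rceil$ propagates correctly: since multiplying by $x^k$ shifts the minimal exponent upward by $k\ge 1$, the smallest $r$ appearing on the right is controlled by the $j=3,k=1$ term, and checking that this respects $\lceil(n+5)/4\rceil$ is a short but necessary verification. Once these bounds are secured uniformly, the induction closes and the lemma follows.
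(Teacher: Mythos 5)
Your proposal follows essentially the same route as the paper's proof: the identical recurrence $U(x^n) = -\sum_{j=0}^{3} a_j(\tau)\,U(x^{n+j-4})$ obtained by multiplying the modular equation (\ref{modX}) by $\mathcal{A}\cdot x^{n-4}$ and invoking the pull-out property of $U_4$, the identical valuation inequality $\phi(j,k)+\pi(n+j-4,r-k)\ge\pi(n,r)$ checked piecewise, and four explicitly computed base cases (the paper takes $0\le n\le 3$, matching the Appendix, rather than your $1\le n\le 4$, which works equally well). Your one slip---claiming the minimal exponent on the right-hand side is governed by the $j=3,k=1$ term when it is in fact the $j=0,k=1$ term---is immaterial, since the verification you propose runs over all $(j,k)$ and yields exactly the paper's bound $r\ge\left\lceil (n+1)/4 \right\rceil$.
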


\begin{proof}
We compute $U\left( x^n \right)$ for $0\le n\le 3$ directly, and give the polynomials in the Appendix below.

Let us take some $n\ge 4$, and suppose that (\ref{uxnform}) applies to $U(x^m)$ for $0\le m\le n$.  We begin by rearranging (\ref{modX}), and then multiplying by $\mathcal{A}\cdot x^{n-4}$: \begin{align*}
x^4 &= -\sum_{j=0}^3 a_j(4\tau) x^j,\\
x^n &= -\sum_{j=0}^3 a_j(4\tau) x^{n+j-4},\\
\mathcal{A}\cdot x^n &= -\sum_{j=0}^3 a_j(4\tau) \mathcal{A}\cdot x^{n+j-4}.
\end{align*}  We now apply our $U$ operator from (\ref{uoperatordefn}) to both sides: \begin{align*}
U_4\left(\mathcal{A}\cdot x^n\right) &= -\sum_{j=0}^3 a_j(\tau) U_4\left(\mathcal{A}\cdot x^{n+j-4}\right),\\
U\left(x^n\right) &= -\sum_{j=0}^3 a_j(\tau) U\left(x^{n+j-4}\right).
\end{align*}  By hypothesis, (\ref{uxnform}) applies for $U\left(x^{n+j-4}\right)$ for $0\le j\le 3$.  We therefore have \begin{align*}
U\left(x^n\right) &= -\sum_{j=0}^3 a_j(\tau) \sum_{r\ge \left\lceil (n+j-3)/4 \right\rceil} h(n+j-4,r)\cdot 2^{\pi(n+j-4,r)}x^r.
\end{align*}  Remembering (\ref{expressionaj})-(\ref{expressionajk}), we have \begin{align*}
U\left(x^n\right) &= -\sum_{j=0}^3 \sum_{k=1}^4 a(j,k) 2^{\phi(j,k)}x^k \sum_{r\ge \left\lceil (n+j-3)/4 \right\rceil} h(n+j-4,r)\cdot 2^{\pi(n+j-4,r)}x^r\\
U\left(x^n\right) &= -\sum_{\substack{0\le j\le 3\\ 1\le k\le 4\\ r\ge \left\lceil (n+j-3)/4 \right\rceil}} a(j,k)h(n+j-4,r)\cdot 2^{\pi(n+j-4,r)+\phi(j,k)}x^{r+k}.
\end{align*}  If we relabel our powers of $x$, we can rewrite our relation as \begin{align}
U\left(x^n\right) &= -\sum_{\substack{0\le j\le 3\\ 1\le k\le 4\\ r-k\ge \left\lceil (n+j-3)/4 \right\rceil}} a(j,k)h(n+j-4,r-k)\cdot 2^{\pi(n+j-4,r-k)+\phi(j,k)}x^{r}.\label{completelemmainduc}
\end{align}  Notice that \begin{align}
r\ge \left\lceil (n+j-3)/4 \right\rceil + k \ge \left\lceil (n-3)/4 \right\rceil + 1 = \left\lceil (n+1)/4 \right\rceil.
\end{align}  We now verify that $\pi(n+j-4,r-k)+\phi(j,k)\ge \pi(n,r)$.  For $j=0$, \begin{align*}
\pi(n-4,r-1)+\phi(1,1) &= \left\lfloor \frac{4r-n-1}{2} \right\rfloor = \pi(n,r),\\
\pi(n-4,r-2)+\phi(1,2) &= \left\lfloor \frac{4r-n-5}{2} \right\rfloor + 2 = \left\lfloor \frac{4r-n-1}{2} \right\rfloor = \pi(n,r),\\
\pi(n-4,r-3)+\phi(1,3) &= \left\lfloor \frac{4r-n-9}{2} \right\rfloor + 7 = \left\lfloor \frac{4r-n+5}{2} \right\rfloor\ge \pi(n,r)\\
\pi(n-4,r-4)+\phi(1,4) &= \left\lfloor \frac{4r-n-13}{2} \right\rfloor + 8 = \left\lfloor \frac{4r-n+3}{2} \right\rfloor\ge \pi(n,r).
\end{align*}  For $1\le j\le 3$, \begin{align*}
\pi(n+j-4,r-k)+\phi(j,k) &= \left\lfloor \frac{4r-4k-n-j+3}{2} \right\rfloor + \left\lfloor \frac{4k+2j+1}{2} \right\rfloor\\
&\ge \left\lfloor \frac{4r-n+j+3}{2} \right\rfloor\ge\pi(n,r).
\end{align*}  For a given $r\ge 0$, we can now define the value of our discrete array $h(n,r)$ as the coefficient of $x^r$ in (\ref{completelemmainduc}) after dividing out by $2^{\pi(n,r)}$.  We only need to know that $h(n,r)$ exists for $0\le n\le 3$.  This can be shown by computing the four initial cases $U(x^n)$ directly for $0\le n\le 3$, confirming that it is a polynomial in $x$, and and showing that each coefficient of $x^r$ is divisible by $2^{\pi(n,r)}$.  We provide the explicit forms of these initial cases in the Appendix.

\end{proof}

\section{Main Theorem}\label{maintheoremsection}

In order to prove Theorem \ref{Thmmaintoc}, we will show that each $L_{\alpha}$ is a polynomial in $x$ in which each coefficient of $x$ is divisible by $8^{\alpha}$.  We already understand that this is true for $\alpha=1$, by (\ref{L1inx}).  We therefore need to construct a subspace of integer polynomials in $x$ which includes $L_{1}$, and which is stable upon application of the operator $U$.  We define a subspace of integer polynomials in $x$ with a certain lower bound on the 2-adic valuation of the coefficients of $x$.  For \begin{align*}
\theta(n) &:= \left\lfloor \frac{8n-5}{4} \right\rfloor,
\end{align*} define the space \begin{align}
\mathcal{V} :=& \left\{ \sum_{n\ge 1} s(n)\cdot 2^{\theta(n)}\cdot x^n \right\},
\end{align} in which $s(n)$ is any discrete integer-valued function.

\begin{theorem}\label{bigtheorem}
For all $f\in\mathcal{V}$, \begin{align}
\frac{1}{8}U\left( f \right)\in\mathcal{V}.
\end{align}
\end{theorem}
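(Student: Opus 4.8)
The plan is to take an arbitrary element $f = \sum_{n\ge 1} s(n)\cdot 2^{\theta(n)} x^n \in \mathcal{V}$, apply $U$ term by term using its linearity, and substitute the expansion of $U(x^n)$ from Lemma \ref{mainlemmag}. This gives
\begin{align*}
U(f) = \sum_{n\ge 1} s(n)\cdot 2^{\theta(n)}\, U(x^n) = \sum_{n\ge 1}\sum_{r\ge \left\lceil (n+1)/4 \right\rceil} s(n) h(n,r)\cdot 2^{\theta(n)+\pi(n,r)} x^r.
\end{align*}
Collecting the coefficient of a fixed power $x^r$, I need to show that the total $2$-adic valuation of every contributing term is at least $\theta(r)+3$, since the target membership $\frac{1}{8}U(f)\in\mathcal{V}$ demands that the coefficient of $x^r$ in $U(f)$ be divisible by $2^{\theta(r)+3}$ (the extra $3$ accounting for the factor of $8$). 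Thus the crux is the purely arithmetic inequality
\begin{align}
\theta(n)+\pi(n,r) \ge \theta(r) + 3 \qquad \text{for all valid } n,r,\label{keyineq}
\end{align}
where "valid" means $r \ge \left\lceil (n+1)/4 \right\rceil$, i.e. $4r \ge n+1$.

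First I would reduce to verifying \eqref{keyineq} by cases matching the definition of $\pi(n,r)$: namely $n=1$, $n=2$, and $n\ge 3$. In each case I would substitute $\theta(n)=\left\lfloor (8n-5)/4\right\rfloor$ and the corresponding branch of $\pi$, then bound the floor functions. For the generic branch $n\ge 3$, the left side is $\left\lfloor (8n-5)/4\right\rfloor + \left\lfloor (4r-n-1)/2\right\rfloor$, and I would compare this against $\theta(r)+3 = \left\lfloor (8r-5)/4\right\rfloor + 3$; the dominant behavior is linear in $n$ and $r$, so after clearing floors the inequality should reduce to something like $2n \ge \text{const}$ combined with the constraint $4r\ge n+1$, which forces $r$ to grow with $n$ and keeps the $\pi$-term large enough. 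The small cases $n=1,2$ carry the extra additive constants ($+3$ and $+1$ respectively) precisely so that \eqref{keyineq} survives there too, and I expect these boundary cases to be where the bound is tightest.

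The main obstacle is the interaction of the floor functions: the inequality \eqref{keyineq} is sharp or nearly sharp for small $n$ and for $r$ at its minimum value $\left\lceil (n+1)/4\right\rceil$, so naive real-valued bounding will not suffice and I must track the parity of $n$ (and possibly $r$) to evaluate $\lfloor\cdot\rfloor$ exactly. The care taken in Lemma \ref{mainlemmag} to establish $\pi(n+j-4,r-k)+\phi(j,k)\ge\pi(n,r)$ suggests the same floor-manipulation technique will apply here. Once \eqref{keyineq} is verified in all branches, I would conclude by setting $t(r) := \frac{1}{8}\sum_{n} s(n)h(n,r)\cdot 2^{\theta(n)+\pi(n,r)-\theta(r)}$, observe that each summand has a nonnegative integer exponent of $2$ by \eqref{keyineq} and that $h(n,r), s(n)\in\mathbb{Z}$, so $t(r)\in\mathbb{Z}$, giving $\frac{1}{8}U(f) = \sum_{r\ge 1} t(r)\cdot 2^{\theta(r)} x^r \in \mathcal{V}$ as required. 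A minor point to check is that the summation range produces no powers $x^r$ with $r<1$, which follows from $r\ge\left\lceil(n+1)/4\right\rceil\ge 1$ for $n\ge 1$.
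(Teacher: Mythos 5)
Your proposal is correct and follows essentially the same route as the paper's own proof: apply $U$ term by term via linearity and Lemma \ref{mainlemmag}, then reduce everything to the arithmetic inequality $\theta(n)+\pi(n,r)\ge\theta(r)+3$, verified in the three branches $n=1$, $n=2$, $n\ge 3$ (with equality in the first two, exactly as you predicted the tight cases would be). The paper carries out the $n\ge 3$ floor-function computation you sketched by rewriting $\left\lfloor \frac{4r-n-1}{2} \right\rfloor + \left\lfloor \frac{8n-5}{4} \right\rfloor = \left\lfloor \frac{8r+6n-10}{4} \right\rfloor \ge \theta(r) + \left\lfloor \frac{6n-5}{4} \right\rfloor \ge \theta(r)+3$.
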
  With this result we prove Theorem \ref{Thmmaintoc}.

\begin{proof}
Let $f\in\mathcal{V}$.  Then \begin{align}
f = \sum_{n\ge 1} s(n)\cdot 2^{\theta(n)}\cdot x^n.
\end{align}  Applying $U$, we have \begin{align}
U\left(f\right) &= \sum_{n\ge 1} s(n)\cdot 2^{\theta(n)}U\left(x^n\right)\\
&= \sum_{n\ge 1}\sum_{r\ge \left\lceil (n+1)/4 \right\rceil} s(n) h(n,r)\cdot 2^{\pi(n,r)+\theta(n)}x^r\\
&= \sum_{r\ge 1}\sum_{n\ge 1} s(n) h(n,r)\cdot 2^{\pi(n,r)+\theta(n)}x^r.
\end{align}  We want to show that \begin{align}
\pi(n,r)+\theta(n)\ge\theta(r)+3
\end{align} for all $n\ge 1$, $r\ge \left\lceil (n+1)/4 \right\rceil$.  In that case, it will be demonstrated that $U\left(f\right)$ remains in $\mathcal{V}$, and that each factor gains a power of 8, whereupon we have Theorem \ref{bigtheorem}.

First examining the cases for $n=1,2$, we have \begin{align}
\pi(1,r)+\theta(1) &= \left( \left\lfloor \frac{8r-5}{4} \right\rfloor + 3\right) + 0 = \theta(r) + 3,\\
\pi(2,r)+\theta(2) &= \left(\left\lfloor \frac{8r-5}{4} \right\rfloor + 1\right) + 2 = \theta(r) + 3.
\end{align}  For $n\ge 3$ we have \begin{align}
\pi(n,r)+\theta(n) &= \left\lfloor \frac{4r-n-1}{2} \right\rfloor + \left\lfloor\frac{8n-5}{4}\right\rfloor\\
&= \left\lfloor \frac{8r-2n-2}{4} \right\rfloor + \left\lfloor\frac{8n-5}{4}\right\rfloor\\
&\ge \left\lfloor \frac{8r+6n-10}{4} \right\rfloor\\
&\ge \left\lfloor \frac{8r-5}{4} \right\rfloor + \left\lfloor \frac{6n-5}{4} \right\rfloor\\
&\ge \theta(r) + 3.
\end{align}  

\end{proof}

\begin{proof}[Proof of Theorem \ref{Thmmaintoc}]

Notice that by (\ref{L1inx}), $L_{1}\equiv 0\pmod{8}$, and $\frac{1}{8}L_1\in\mathcal{V}$.  Suppose that for some fixed $\alpha\ge 1$, $L_{\alpha}\equiv 0\pmod{8^{\alpha}}$, and that $\frac{1}{8^{\alpha}}L_{\alpha}\in\mathcal{V}$.  Then \begin{align}
\frac{1}{8}\cdot U\left( \frac{1}{8^{\alpha}}L_{\alpha} \right) &= \frac{1}{8^{\alpha+1}}\cdot U\left(L_{\alpha} \right) \\
&= \frac{1}{8^{\alpha+1}}\cdot L_{\alpha+1}\in\mathcal{V}\subseteq\mathbb{Z}[x],
\end{align} whence $L_{\alpha+1}\equiv 0\pmod{8^{\alpha+1}}$.
\end{proof}  From this, Theorem \ref{Thm12} immediately follows.

\section{Proving the Initial Relations}\label{initialrelsection}

Some important aspects of our proof which we have not yet covered are the validity of (\ref{L1inx}), the modular equation in Theorem \ref{modeqntheorem}, and the relations for $U(x^n)$, $0\le n\le 3$ which appear in (\ref{relationapu1})-(\ref{relationapux3}) in the Appendix below.

These results may be proved using the now standard techniques of the modular cusp analysis.  For a classical approach to the theory, see \cite{Knopp}.  For a more modern approach, see \cite{Diamond}.  For a description of the theory specifically applied to partition congruences, see \cite[Section 2]{Smoot2}.  However, we will focus on an algorithmic approach, because it provides us with the most convenient way of proving our final steps.  We therefore defer to Newman's important paper \cite{Newman}, Radu's algorithmic procedure in \cite{Radu}, and its implementation in \cite{Smoot1}.

Beginning with the case of (\ref{L1inx}), we define \begin{align*}
\texttt{xq} = 1/x.
\end{align*}  We then use the RaduRK package to compute \begin{flalign*}
\texttt{In[1] = }&\texttt{RKMan} [ 8,2,\{-22,7\},4,3,\{1/\texttt{xq},\{1\}\} ]&\\
&\tt{\prod_{\delta | M} (q^{\delta};q^{\delta})^{r_{\delta}}_{\infty} = \sum_{n=0}^{\infty}a(n)q^n}\\
&\boxed{\tt{f_1(q)\cdot\prod_{j'\in P_{m,r}(j)}\sum_{n=0}^{\infty}a(mn+j')q^n = \sum_{g\in\mathrm{AB}} g\cdot p_g(t)}}\\
&\texttt{Modular Curve: }\texttt{X}_{\texttt{0}}\texttt{(N)}\\
\texttt{Out[1] = }&\\
&\texttt{N:}\ \ \ \ \ \ \ \ \ \ \ \ \ \ \ \ \ \ \ \ \ \ \ \ \ \ \ \ \ \ 8\\
&\texttt{\{M, (}\texttt{r}_{\tt{\delta | M}}\texttt{)\}}\texttt{:}\ \ \ \ \ \ \ \ \ \ \ \  \{2, \{-22,7\}\} \\
&\texttt{m :} \ \ \ \ \ \ \ \ \ \ \ \ \ \ \ \ \ \ \ \ \ \ \ \ \ \ \ \ 4\\
&\texttt{P}_{\texttt{m,r}}\texttt{(j): } \ \ \ \ \ \ \ \ \ \ \ \ \ \ \ \  \{3\}\\
&\texttt{f}_{\texttt{1}}\texttt{(q): } \ \ \ \ \ \ \ \ \ \ \ \ \ \ \ \ \ \ \  \frac{((q;q)_{\infty})^{98}((q^4;q^4)_{\infty})^{38}}{q^{17}((q^2;q^2)_{\infty})^{49}((q^8;q^8)_{\infty})^{72}}\\
&\texttt{t: }  \ \ \ \ \ \ \ \ \ \ \ \ \ \ \ \ \ \ \ \ \ \ \ \ \ \frac{((q;q)_{\infty})^4((q^4;q^4)_{\infty})^4}{q((q^2;q^2)_{\infty}^2((q^8;q^8)_{\infty})^4}\\
&\texttt{AB: } \ \ \ \ \ \ \ \ \ \ \ \ \ \ \ \ \ \ \ \ \ \ \ \ \{1\}\\
&\{ \texttt{p}_{\texttt{g}}\texttt{(t):g}\in\texttt{AB}\}\texttt{: }  \ \ \ \ \ \{18889465931478580854784 + 42501298345826806923264 t\\ 
& \ \ \ \ \ \ \ \ \ \ \ \ \ \ \ \ \ \ \ \ \ \ \ \ \ \ \ \ \ \ \ +43792570430986475536384 t^2 + 27374968205384974598144 t^3\\ 
& \ \ \ \ \ \ \ \ \ \ \ \ \ \ \ \ \ \ \ \ \ \ \ \ \ \ \ \ \ \ \ + 
 11593058074386073911296 t^4 + 3517455424164433231872 t^5\\ 
& \ \ \ \ \ \ \ \ \ \ \ \ \ \ \ \ \ \ \ \ \ \ \ \ \ \ \ \ \ \ \ + 
 788474037948865576960 t^6 + 132703559201308278784 t^7\\ 
& \ \ \ \ \ \ \ \ \ \ \ \ \ \ \ \ \ \ \ \ \ \ \ \ \ \ \ \ \ \ \ + 
 16870983657286795264 t^8 + 1616547968486211584 t^9\\ 
& \ \ \ \ \ \ \ \ \ \ \ \ \ \ \ \ \ \ \ \ \ \ \ \ \ \ \ \ \ \ \ + 
 115546340691279872 t^{10} + 6042206699782144 t^{11}\\ 
& \ \ \ \ \ \ \ \ \ \ \ \ \ \ \ \ \ \ \ \ \ \ \ \ \ \ \ \ \ \ \ + 
 224018944753664 t^{12} + 5608324988928 t^{13} + 87754260480 t^{14}\\ 
& \ \ \ \ \ \ \ \ \ \ \ \ \ \ \ \ \ \ \ \ \ \ \ \ \ \ \ \ \ \ \ + 
 753360896 t^{15} + 2769184 t^{16} + 2376 t^{17}\}\\
&\texttt{Common Factor: } \ \ \ \ \ \ \  8
\end{flalign*} The critical point here is that the program produces a function which is a modular function with a pole only at the cusp $[\infty]_{8}$: \begin{align*}
\frac{(q;q)_{\infty}^{98}(q^4;q^4)_{\infty}^{38}}{q^{17}(q^2;q^2)_{\infty}^{49}(q^8;q^8)_{\infty}^{72}}\cdot\sum_{n=0}^{\infty}d_7(4n+3)q^n\in\mathcal{M}^{\infty}\left(  \Gamma_0(8)\right).
\end{align*}  This function is then shown to be a polynomial in $t$, which is in our case $1/x$.  If we multiply this function by $x^{18}$, we have \begin{align*}
&x^{18}\cdot\left(\frac{(q;q)_{\infty}^{98}(q^4;q^4)_{\infty}^{38}}{q^{17}(q^2;q^2)_{\infty}^{49}(q^8;q^8)_{\infty}^{72}}\cdot\sum_{n=0}^{\infty}d_7(4n+3)q^n\right)\\
&=\frac{(q;q)^{26}_{\infty}(q^4;q^4)^{2}_{\infty}}{(q^2;q^2)^{13}_{\infty}}\cdot\sum_{n=0}^{\infty}d_7(4n+3)q^{n+1}\\
&= L_1.
\end{align*}  We then multiply our polynomial in $t$ by $x^{18}$, and express $t=1/x$.  Doing so gives us (\ref{L1inx}).  Relations (\ref{relationapu1})-(\ref{relationapux3}) may be similarly computed.

The modular equation may be derived using (\ref{relationapu1})-(\ref{relationapux3}), but we will only give its proof here.  We note that $x(4\tau),x(\tau)$ are both modular functions over the subgroup $\Gamma_0(32)$, \cite[Theorem 1]{Newman}.  We can compute the orders of these functions with respect to this subgroup using \cite[Theorem 23]{Radu}.  Doing so gives us: \begin{align*}
\mathrm{ord}_{\infty}(x(4\tau)) &= -4,\ \ \ \ \ \ \ \mathrm{ord}_{\infty}(x) = 1, \\
\mathrm{ord}_{1/16}(x(4\tau)) &= 0,\ \ \ \ \ \ \ \mathrm{ord}_{1/16}(x) = 1,\\
\mathrm{ord}_{1/8}(x(4\tau)) &= 0,\ \ \ \ \ \ \ \mathrm{ord}_{1/8}(x) = 1,\\
\mathrm{ord}_{1/4}(x(4\tau)) &= 0,\ \ \ \ \ \ \ \mathrm{ord}_{1/4}(x) = 0,\\
\mathrm{ord}_{3/8}(x(4\tau)) &= 0,\ \ \ \ \ \ \ \mathrm{ord}_{3/8}(x) = 1,\\
\mathrm{ord}_{1/2}(x(4\tau)) &= 1,\ \ \ \ \ \ \ \mathrm{ord}_{1/2}(x) = 0,\\
\mathrm{ord}_{3/4}(x(4\tau)) &= 1,\ \ \ \ \ \ \ \mathrm{ord}_{3/4}(x) = 0,\\
\mathrm{ord}_{0}(x(4\tau)) &= 1,\ \ \ \ \ \ \ \mathrm{ord}_{0}(x) = -4.
\end{align*}  Notice that $x(4\tau)^{-4}x(\tau)\in\mathcal{M}^{\infty}\left(  \Gamma_0(8)\right)$.  So if we divide the left-hand side of (\ref{modX}) by $x(4\tau)^{16}$, then the left-hand side is converted into a modular function with a pole only at the cusp $[\infty]_{32}$.

Therefore, we need only check the principal part and constant of the power series expansion of \begin{align}
x(4\tau)^{-16}\left(x^4+\sum_{j=0}^3 a_j(4\tau) x^j\right).
\end{align}  This can be quickly computed with any computer algebra system (or, indeed, by hand) as 0.  We need not compute any additional coefficients.

\section{Additional Congruence Results}\label{additionalsection}

The natural question that follows is whether other infinite congruence families modulo powers of 2 exist for $d_k$ over other values of $k$.  In particular, we may use a very similar prefactor setup and $U$ operator to examine possible congruences for the more general \begin{align}
D_{8k+7}(q) := \sum_{n=0}^{\infty}d_{8k+7}(n)q^n = \prod_{m=1}^{\infty}\frac{(1-q^{2m})^{8k+7}}{(1-q^m)^{24k+22}}\label{D8k7}.
\end{align}  It is tempting to believe that more general congruence families exist in this form, and experimental evidence appears at first sight to favor the existence of other congruence families.  However, we have thus far carefully searched for such a congruence family in vain.  Certainly, individual congruences exist for $d_k(n)$ over large powers of any prime $\ell$, provided that one varies $k$.  For example, Baruah, Das, and Talukdar \cite{Baruahetal} have proved that if \begin{align}
d_k(2^Mn+r)\equiv 0\pmod{2^N},\label{baruahext1}
\end{align} then for any $j\ge 0$, \begin{align}
d_{k+2^{M+N-1}j}(2^Mn+r)\equiv 0\pmod{2^N}.\label{baruahext2}
\end{align}  We obviously have an infinite family corresponding to (\ref{baruahext1}).  However, notice that this parallel set of congruences (\ref{baruahext2}) does not necessarily correspond to an infinite family for a fixed $k$; as the power of 2 increases, so too does the index $k_0$ of $d_{k_0}(n)$.

On the other hand, such a proliferance of individual congruences should tempt one to search for other congruence families for different but fixed $k$ that somehow run parallel to those of the family proved in this article.  Nevertheless, we have been unable to find such a family, despite a long search.

Indeed, we have reason to believe that such families are actually quite rare, and indeed outright nonexistent, for most values of $k$.

We are in the process of conducting a detailed investigation of when such families are able to occur; however, it certainly demands a much more thorough approach than is appropriate for the remainder of this paper.

We also note that some other congruence families have been found for $d_k(n)$ modulo powers of 3 \cite{AndrewsPaule}, \cite{Smoot2}, and powers of 5 \cite{Banerjee}.  Congruence families for other primes appear very likely.  In both of the latter cases, the classical techniques (that is, manipulating the associated functions $L_{\alpha}$ as polynomials in a given Hauptmodul) are inadequate, and the more recent techniques of localization appear necessary.  Indeed, part of what makes Theorem \ref{Thm12} so remarkable is the comparable simplicity of its proof, which only depends on classical methods.

We expect that a more complete understanding of the circumstances in which an infinite family of congruences modulo powers of any prime $\ell$ exists for $d_k(n)$ (and equally important, the circumstances in which such a family does \textit{not} exist) will be possible very soon.

\section{Appendix}

\begin{align}
U\left( 1 \right) &= 3640 x + 6231264 x^2 + 2312134656 x^3 + 355521773568 x^4 + 
 29519893757952 x^5\label{relationapu1}\\ &+ 1521646554841088 x^6 + 52926661334663168 x^7 + 
 1310564837327110144 x^8\nonumber\\ &+ 23942486575294709760 x^9 + 
 330518389127517306880 x^{10} + 3501373210632581545984 x^{11}\nonumber\\ &+ 
 28712526880139873615872 x^{12} + 182757766232154618986496 x^{13} + 
 899971247072045128744960 x^{14}\nonumber\\ &+ 3393979548633672982724608 x^{15} + 
 9614627478658155397775360 x^{16}\nonumber\\ &+ 19791437929706683090599936 x^{17} + 
 27937520112656821084225536 x^{18}\nonumber\\ &+ 24178516392292583494123520 x^{19} + 
 9671406556917033397649408 x^{20},\nonumber\\
U\left( x \right) &= 480 x + 2451840 x^2 + 1830703104 x^3 + 495729844224 x^4 + 
 68082221187072 x^5\label{relationapux}\\ &+ 5619929991610368 x^6 + 308140965409849344 x^7 + 
 11967581531943206912 x^8\nonumber\\ &+ 343933610805310783488 x^9 + 
 7541976859963962687488 x^{10} + 128987018531625607626752 x^{11}\nonumber\\ &+ 
 1747438683785531255422976 x^{12} + 18952626245617873030479872 x^{13}\nonumber\\ &+ 
 165656328233775543590322176 x^{14} + 
 1170336319370329141589049344 x^{15}\nonumber\\ &+ 
 6679520161261089361353506816 x^{16} + 
 30665365006400143347928793088 x^{17}\nonumber\\ &+ 
 112261430099308616694754181120 x^{18} + 
 322997173707177779010471985152 x^{19}\nonumber\\ &+ 
 713935978476697717639431061504 x^{20} + 
 1169389109609952342176585220096 x^{21}\nonumber\\ &+ 
 1336356272408568006753604599808 x^{22} + 
 950737950171172051122527404032 x^{23}\nonumber\\ &+ 
 316912650057057350374175801344 x^{24},\nonumber\\
U\left( x^2 \right) &= 34 x + 649128 x^2 + 999188736 x^3 + 469960929792 x^4 + 
 103575916707840 x^5\label{relationapux2}\\ &+ 13142823130628096 x^6 + 
 1080778710061154304 x^7 + 62098461599056527360 x^8\nonumber\\ &+ 
 2623660269082228293632 x^9 + 84519653872764356395008 x^{10} + 
 2131706476697046604578816 x^{11}\nonumber\\ &+ 42929493546501248057868288 x^{12} + 
 700513558478561481590833152 x^{13}\nonumber\\ &+ 
 9363134200724858517753692160 x^{14} + 
 103309973951373480156995780608 x^{15}\nonumber\\ &+ 
 945835294599965780764891545600 x^{16} + 
 7205353092856347021622577725440 x^{17}\nonumber\\ &+ 
 45697911126683724742728645345280 x^{18} + 
 240861503115473072724796124430336 x^{19}\nonumber\\ &+ 
 1050474963615392708649645823229952 x^{20} + 
 3763121389405690915490299019001856 x^{21}\nonumber\\ &+ 
 10948761369113220895121045440167936 x^{22} + 
 25446382394087648551788055652990976 x^{23}\nonumber\\ &+ 
 46096489899968019766506944412516352 x^{24} + 
 62687857482486400362814966912253952 x^{25}\nonumber\\ &+ 
 60177909294034506147851494565609472 x^{26} + 
 36346078009743793399713474304540672 x^{27}\nonumber\\ &+ 
 10384593717069655257060992658440192 x^{28},\nonumber
\end{align}
\begin{align}
U\left( x^3 \right) &= x + 116980 x^2 + 391517568 x^3 + 321019983616 x^4 + 
 111973236121600 x^5 + 21337425505910784 x^6\label{relationapux3}\\ &+ 
 2553730125959004160 x^7 + 209453764990082220032 x^8 + 
 12484823733419307433984 x^9\nonumber\\ & + 563791642924407796531200 x^{10} + 
 19886310763248535807721472 x^{11}\nonumber\\ & + 560619149820617402792017920 x^{12} + 
 12855690027118794791259734016 x^{13}\nonumber\\ & + 
 243068700423103102488194056192 x^{14} + 
 3829262655422818366954045177856 x^{15}\nonumber\\ & + 
 50666601737998691101231183560704 x^{16} + 
 566395672142771163690357451915264 x^{17}\nonumber\\ & + 
 5371596722455695258972289849360384 x^{18} + 
 43327315880729074970221060479254528 x^{19}\nonumber\\ & + 
 297518918103841878357244746758356992 x^{20} + 
 1738237469614080574950618042390806528 x^{21}\nonumber\\ & + 
 8621442084428477791752217394715557888 x^{22} + 
 36154467486544013569919867333480808448 x^{23}\nonumber\\ & + 
 127391720725152650083511609060455612416 x^{24}\nonumber\\ & + 
 373776737506583531887366397457143431168 x^{25}\nonumber\\ & + 
 901751212782819217578862421446940950528 x^{26}\nonumber\\ & + 
 1757278352654097795186046912413084680192 x^{27}\nonumber\\ & + 
 2696281874184257965081458786319057551360 x^{28}\nonumber\\ & + 
 3134485767560304742791290024023587553280 x^{29}\nonumber\\ & + 
 2593988433774263325971779478137092440064 x^{30}\nonumber\\ & + 
 1361129467683753853853498429727072845824 x^{31}\nonumber\\ & + 
 340282366920938463463374607431768211456 x^{32}.\nonumber
\end{align}

\section{Acknowledgments}
The second author was funded in whole by the Austrian Science Fund (FWF): Einzelprojekte P 33933, ``Partition Congruences by the Localization Method".  Our most profound thanks to the Austrian Government and People for their generous support.

\end{document}